\newtheorem{theorem}{Theorem}[section]
\newtheorem{lemma}[theorem]{Lemma}
\theoremstyle{definition}
\newtheorem{definition}[theorem]{Definition}
\newtheorem{remark}[theorem]{Remark}
\newtheorem{ltheorem}{Theorem} 
\def\real{\mathbb{R}}
\def\integer{\mathbb{Z}}
\def\supp{\operatorname{supp}}
\def\d{\operatorname{dist}}
\def\dim{\operatorname{dim}}
\def\id{\operatorname{id}}
\def\cF{\mathcal{F}}
\def\rels{\sim^s}
\def\relu{\sim^u}
\def\cH{\mathcal{H}}
\def\cP{\mathcal{P}}
\def\cM{\mathcal{M}}
\def\cN{\mathcal{N}}
\def\quand{\quad\text{and}\quad}
\def\hA{\hat{A}}
\def\proj{\mathbb{R}P^{2d-1}}
\def\SL{SL(2,\real)}
\def\Sp{Sp(2d,\real)}
\def\GL{GL(2d,\real)}
\newcommand{\norm}[1]{{\left\lVert  #1  \right\rVert}}
\title[Positive Lyapunov exponents]
{Positive Lyapunov exponents for cocycles over time one anosov maps}
\author{Mauricio Poletti}
\subjclass[2010]{37H15,37D30,37D25}
\keywords{Lyapunov Exponents, Partially Hyperbolic Diffeomorphism, Anosov flows, Linear Cocyles}
\address{LAGA -- Universit\'e Paris 13, 99 Av. Jean-Baptiste Cl\'ement, 93430 Villetaneus, France.}
\email{mpoletti@impa.br}
\begin{document}

\begin{abstract}
We prove that in an open and dense set, Symplectic linear cocycles over time one maps of Anosov flows, have positive Lyapunov exponents for SRB measures. 
\end{abstract}

\maketitle

%\setcounter{tocdepth}{2}
%\tableofcontents
\section{introduction}
Positiveness of Lyapunov exponents was widely studied in the past years.
Avila \cite{Av11} proved for several topologies, including $C^r$ topology for $r\geq 0$, 
that there exists a dense, but not open, set of cocycles taking values in $\SL$ with non zero Lyapunov exponents. This was generalized to $\Sp$ cocycles by Xu \cite{Xu15}.

A result stated by Ma\~n\'e and proved by Bochi \cite{Boc02} expose that the generic behaviour in the $C^0$ topology is not positiveness of Lyapunov exponents:
$C^0$ generically $\SL$ cocycles are uniformly hyperbolic or have zero Lyapunov exponents.

In the case of more regular cocycles with some hyperbolic behaviour on the base map and bunching condition on the fibers, the generic behavior changes radically. Viana~\cite{Almost} proved that in  this class the Lyapunov exponents are generically
non-zero: When the base maps are non-uniformly hyperbolic and the cocycle take values in the group $SL(d,\real)$. This was generalized by Bessa-Bochi-Cambrainha-Matheus-Varandas-Xu~\cite{BBCMVX} for any non-compact semi-simple Lie group (for example $\Sp$). 

These results were extended in the partially hyperbolic setting in two cases:
when the center leaves are compact, the map has zero exponent on the center
direction and the cocycle takes 
values in $\Sp$, by the author~\cite{Pol16}, 
and when the map is volume preserving dynamically coherent, accessible and the cocycle takes values in $SL(d,\real)$ by Avila-Viana-Santamaria~\cite{ASV13}.

Here we deal with an especial case of partially hyperbolic maps, the time one map of a Anosov flow that preserves an SRB measure. As we do not need to assume 
that our map is accessible, we can deal with cases that can not be covered by \cite{ASV13}, for example Anosov flows given by suspensions of hyperbolic diffeomorphism.

For an volume preserving Anosov flow, the volume is an SRB invariant measure, but we can also deal with dissipative flows (this means that do not preserve volume). For example any non-trivial attractor of an Anosov flows admits some SRB measure and our result cover this cases.

\medskip
\textbf{Acknowledgements.} Thanks to C. Matheus and M. Shannon for the discussions and useful ideas, and F. Lenarduzzi for the commentaries.

\section{Definitions and results}

A diffeomorphism $f:M \to M$ of a compact manifold $M$ is \emph{partially hyperbolic} if there
exists a non-trivial splitting of the tangent bundle
\begin{equation*}
TM=E^s\oplus E^c\oplus E^u
\end{equation*}
invariant under the derivative $Df$, a Riemannian metric $\|\cdot\|$ on $M$, and positive
constants $\lambda$, $\gamma$ with $\lambda<1$,
and $\lambda<\gamma$ such that, for any unit vector $v\in T_pM$,
\begin{alignat}{2}
& \|Df(p)v \| < \lambda & \quad & \text{if } v\in E^s(p),\label{eq.PH1}\\
\gamma < & \|Df(p)v \|  < \gamma^{-1} & & \text{if } v\in E^c(p), \label{eq.PH2}\\
{\lambda}^{-1} < & \|Df(p)v\| & &  \text{if } v\in E^u(p).\label{eq.PH3}
\end{alignat}
All three sub bundles $E^s$, $E^c$, $E^u$ are assumed to have positive dimension.

The stable and unstable bundles $E^s$ and $E^u$
are uniquely integrable and their integral manifolds form two transverse continuous foliations
$W^s$ and $W^u$, whose leaves are immersed sub manifolds of the same class of
differentiability as $f$. These foliations are referred to as the \emph{strong-stable} and
\emph{strong-unstable} foliations. They are invariant under $f$, in the sense that
$$
f(W^s(x))= W^s(f(x)) \quand f(W^u(x))= W^u(f(x)),
$$
where $W^s(x)$ and $W^s(x)$ denote the leaves of $W^s$ and $W^u$, respectively,
passing through any $x\in M$.

We take $M$ to be endowed with the distance $\d$ associated to such a Riemannian structure.
The \emph{Lebesgue class} is the measure class of the volume induced by this (or any other)
Riemannian metric on $M$. These notions extend to any sub manifold of $M$, just considering
the restriction of the Riemannian metric to the sub manifold. We say that $f$ is \emph{volume preserving}
if it preserves some probability measure in the Lebesgue class of $M$.

A special class of partially hyperbolic maps are the \emph{time one maps of Anosov flows}. We say that a flow $\phi_t:M\to M$ is an Anosov flow if there exists a splitting 
$$
TM=E^s\oplus X\oplus E^u,
$$
where $X(p)=\frac{d }{dt}\phi_t(p)\mid_{t=0}$, invariant under the derivative $D\phi_t$, where $E^s$ is contracted and $E^u$ is expanded by the derivative of $\phi_t$.

Let $f:M\to M$ be the time one map of a $C^2$ Anosov flow (i.e: $f=\phi_1$). This map is partially hyperbolic with center bundle $E^c=X$.

In this particular case the center bundle $E^c=X$ is integrable and $C^1$ (observe that $W^c(p)=\phi_\real(p)$).  
The bundles $E^c+E^u$ and $E^s+E^c$ are integrable and absolutely continuous (See \cite{3flows}). We call them center stable $W^{cs}$ and center unstable $W^{cu}$ foliations.

We say that a measure $\mu$ is an \emph{SRB measure}
for the flow if the disintegration of the measure along the center unstable leaves is in the Lebesgue class along this sub-manifolds (for a more detaills see \cite{3flows}). Observe that we are asking for the measure to be invariant for the flow not just for the time one map.

Recall that by the spectral decomposition theorem the non-wandering set of the flow can be decomposed into basic sets $\Omega(\phi_t)=\Delta_1\cup\cdots \cup \Delta_n$, where the $\Delta_i$ are invariant and the restriction $\phi_t$ is transitive in this sets, in particular the support of every ergodic measure is supported in one of this sets.

Also this basic sets coincide with the closure of the homoclinic class of its periodic points, as the support of an SRB measures is saturated by the center unstable foliation this implies that $\supp(\mu)=\Delta_i$ for some $i\in \{1,\dots,n\}$, moreover this set is an attractor.
Conversely, every non trivial attractor of an hyperbolic flow admits some  SRB measure.

The symplectic group $\Sp$ is the subgroup of matrices $B\in\GL$ such that 
\begin{equation*}
B^T J B=J,\quand J=\left(\begin{array}{cc}
0& \id_d\\ -\id_d&0
\end{array}\right).
\end{equation*}

Let $H^\alpha(M)$ be the Banach space of $\alpha$-H\"older maps $A:M\to \Sp$ with norm 
$$
\norm{A}_\alpha=\sup_{x\in M}\norm{A(x)}+\sup_{x\neq y}\frac{\norm{A(x)-A(y)}}{\norm{x-y}^\alpha}.
$$

The \emph{linear cocycle} defined by $A:M\rightarrow \Sp$ over 
$f:M\rightarrow M$ is the (invertible) map $F_A:M\times \real^{2d}\rightarrow M\times \real^{2d}$ 
given by
\begin{equation}
\nonumber F_A\left(x,v\right)=\left(f(x),A(x)v\right).
\end{equation}
Its iterates are given by $F^n_A\left(x,v\right)=\left(f^n(x),A^n(x)v\right)$ where
\begin{equation*}\label{def:cocycles}
A^n(x)=
\left\{
	\begin{array}{ll}
		A(f^{n-1}(x))\ldots A(f(x))A(x)  & \mbox{if } n>0 \\
		Id & \mbox{if } n=0 \\
		A(f^{n}(x))^{-1}\ldots A(f^{-1}(x))^{-1}& \mbox{if } n<0 \\
	\end{array}
\right.
\end{equation*}
Sometimes we denote this cocycle by $(f,A)$.

Let $\mu$ be an $f$-invariant probability measure on $M$ and suppose that $\log\norm{A}$ and $\log\norm{ A^{-1}}$
are integrable. By Kingman's sub-additive ergodic theorem, see \cite{FET}, 
the limits 
$$\begin{aligned}
   \lambda^+(A,\mu,x)&=\lim_{n\to \infty}\frac{1}{n}\log \norm{A^n(x)}\quand \\ 
   \lambda^-(A,\mu,x)&=\lim_{n\to \infty}-\frac{1}{n}\log \norm{(A^n(x))^{-1}},
  \end{aligned}
$$
exist for $\mu$-almost every $x\in M$. When there is no risk of ambiguity we write just 
$\lambda^+(x)=\lambda^+(A,\mu,x)$.

By Oseledets~\cite{Ose68}, at $\mu$-almost every point
$x\in M$ there exist real numbers $\lambda^+(x)=\lambda_1 (x)>\cdots >\lambda_k(x)=\lambda^-(x)$
and a decomposition $\real^d=E^1_{x} \oplus \cdots \oplus E^k_{x}$ into vector subspaces such that
\begin{equation*}
A(x)E^i_{x}=E^i_{f(x)}
\text{ and } \lambda_i(x)=\lim_{\mid n\mid \to \infty}\frac{1}{n}\log\|A^n(x)v\|
\end{equation*}
for every non-zero $v\in E^i_{x}$ and $1\leq i \leq k$.

Define $L(A,\mu)=\int \lambda^+ \,d\mu$, we say that $A$ \emph{has positive exponent} if
 $L(A,\mu)>0$. When $\mu$ is ergodic, as we are going to assume later, we have 
$L(A,\mu)=\lambda^+(x)$ for $\mu$-almost every $x\in M$.

We say that the cocycle is \emph{fiber bunched} if there exist $C>0$
and $\theta <1$ such that
\begin{equation}\label{eq.FB}
\norm{A^n(p)} \norm{A^n(p)^{-1}} \lambda ^{n \alpha} \leq C \theta ^n
\quad\text{for every $p\in M$ and $n\geq 0$,}
\end{equation}
where $\lambda$ is a hyperbolicity constant for $f$ as in conditions \eqref{eq.PH1}, \eqref{eq.PH2}, \eqref{eq.PH3}.

So now we can state the main theorem of this work.
\begin{ltheorem}\label{teo1}
Let $f:M\to M$ be a $C^2$ time one map of an Anosov flow and $\mu$ an SRB invariant measure for the flow, then $L(A,\mu)>0$ in an open and dense subset of the fiber bunched cocycles of $H^\alpha(M)$.
\end{ltheorem}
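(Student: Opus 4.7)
I would begin by showing that $\{A\in H^\alpha : L(A,\mu)>0\}$ is open in the fiber-bunched subset. Fiber-bunching gives well-defined $\alpha$-H\"older stable and unstable holonomies depending continuously on $A$, and in this class upper semicontinuity of Lyapunov exponents at points where the top exponent is strictly positive is known through arguments in the spirit of Bocker-Viana and Backes-Brown-Butler, adapted to the partially hyperbolic base. Together with the symplectic spectral symmetry, this yields the desired openness.

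\textbf{Density, invariance principle step.} For density, pick $A$ fiber-bunched with $L(A,\mu)=0$. By the symplectic involution of the spectrum, all Oseledets exponents then vanish. I would next invoke an invariance principle in the spirit of Avila-Viana, tailored to the SRB hypothesis. Because $\mu$ is SRB for the flow, its disintegration along $W^{cu}$ is in the Lebesgue class; since $E^c$ is the one-dimensional flow direction and $\phi_t$ is smooth along $E^c$, this transfers to absolute continuity of the conditionals of $\mu$ along strong unstable plaques of $f$. In the zero exponent regime the invariance principle then forces every disintegration of an $F_A$-invariant projective measure projecting to $\mu$ to be invariant under the strong unstable holonomies of $A$ along $W^u$; the symmetric argument applied to $A^{-1}$ and the fact that all exponents vanish yield invariance under strong stable holonomies as well. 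In particular the Oseledets decomposition is both $H^u$- and $H^s$-equivariant on a full $\mu$-measure set.

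\textbf{Density, pinching and twisting.} The support of $\mu$ equals some basic set $\Delta_i$ of the flow and is an attractor, so periodic orbits of $\phi_t$ are dense in it and, by a small reparametrization combined with the hyperbolic closing lemma, so are periodic points of $f=\phi_1$. Fix such a periodic point $p\in\Delta_i$ of period $n$, and pick a transverse homoclinic point $q\in W^s(p)\cap W^u(p)\cap \Delta_i$. Following a Bonatti-Viana type criterion, I would construct an arbitrarily small $H^\alpha$-perturbation $\tilde A$, still fiber-bunched and supported in disjoint small neighborhoods of the $f$-orbits of $p$ and of $q$, such that simultaneously $\tilde A^n(p)$ is \emph{pinching} (simple spectrum, all moduli distinct, respecting the symplectic pairing) and the holonomy loop $H^s_{q,p}\circ H^u_{p,q}$ along the homoclinic is in \emph{twisting} position with respect to the eigenflag of $\tilde A^n(p)$. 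This is incompatible with the holonomy-equivariance of the Oseledets splitting obtained above; hence $L(\tilde A,\mu)>0$.

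\textbf{Main obstacle.} The hardest point is the invariance principle in this setting: accessibility (used by Avila-Santamaria-Viana) is not available, and the SRB measure need not be equivalent to a smooth transverse measure on strong unstable plaques. The heart of the proof is to extract absolute continuity of the unstable conditionals of $\mu$ from center-unstable absolute continuity via the one-dimensional flow center, and then to propagate the holonomy invariance produced by the zero exponent condition across the whole basic set $\Delta_i$, using the transitivity of $\phi_t|_{\Delta_i}$ and the closing lemma to connect $\mu$-typical orbits to periodic ones. Once this is in place, the pinching-twisting perturbation and the deduction of positivity from it are by now standard in the fiber-bunched framework and adapt to the $H^\alpha$ topology with only local adjustments.
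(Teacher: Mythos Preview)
Your density argument has a genuine gap. You assert that periodic points of $f=\phi_1$ are dense in $\Delta_i$, obtained ``by a small reparametrization combined with the hyperbolic closing lemma''. But a periodic point of $f$ is a flow-periodic point whose period is an \emph{integer}; for a general Anosov flow there is no reason such points exist at all, let alone densely. The most relevant class here---suspensions of hyperbolic diffeomorphisms, singled out in the introduction as a target not covered by accessibility methods---typically has \emph{no} $f$-periodic points (take a constant irrational roof function). Reparametrizing the flow changes $f$ itself, and the closing lemma applied to $f$ only shadows along the neutral center direction, returning the same flow orbit rather than an $f$-periodic point. Without $f$-periodic points the Bonatti--Viana pinching/twisting scheme has no anchor, and your density step collapses.

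The paper avoids this by working not with periodic points of $f$ but with periodic orbits of the \emph{flow}: these give compact $f$-invariant center circles on which $f$ acts as a rotation. The main technical input (Theorem~\ref{t.sucinv}) upgrades the usual $su$-invariance of a projective lift---your invariance-principle step---to \emph{$su/c$-invariance}: the disintegration is invariant under stable and unstable holonomies between center leaves, using the good product structure of the SRB measure (projective product on a transversal plus Lebesgue along the center). This transfers the rigidity to a single compact center leaf. One then perturbs so that the restricted cocycle over the rotation has positive exponent (Lemma~\ref{l.posrest}), and uses the resulting Oseledets splitting on the leaf together with several homoclinic loops of the flow and a symplectic transversality lemma (Lemma~\ref{l.transv_finite}) to obstruct the existence of any $su$-invariant measure (Lemma~\ref{l.nonsu}). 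Openness is not argued via continuity of exponents but via Lemma~\ref{l.open}: if $A_k\to A$ with $L(A_k,\mu)=0$ then $A$ admits an $su$-invariant measure, so the cocycles with no $su$-invariant measure form an open set on which the invariance principle forces $L>0$.
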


\section{Holonomies}
Suppose that $f:M\to M$ is partially hyperbolic. 
 The stable and unstable foliations are, usually, \emph{not} transversely smooth:
the holonomy maps between any pair of cross-sections are not even Lipschitz continuous,
in general, although they are always $\gamma$-H\"older continuous for some $\gamma>0$.
Moreover, if $f$ is $C^2$ then these foliations are absolutely continuous, in the following
sense.

Let $d=\dim M$ and $\cF$ be a continuous foliation of $M$ with $k$-dimensional smooth
leaves, $0<k<d$. Let $\cF(p)$ be the leaf through a point $p\in M$ and $\cF(p,R)\subset\cF(p)$
be the neighbourhood of radius $R>0$ around $p$, relative to the distance defined by the
Riemannian metric restricted to $\cF(p)$. A \emph{foliation box} for $\cF$ at $p$ is the
image of an embedding
$$
\Phi:\cF(p,R) \times \real^{d-k} \to M
$$
such that $\Phi(\cdot,0)=\id$, every $\Phi(\cdot, y)$ is a diffeomorphism from $\cF(p,R)$
to some subset of a leaf of $\cF$ (we call the image a \emph{horizontal slice}),
and these diffeomorphisms vary continuously with $y \in \real^{d-k}$.
Foliation boxes exist at every $p\in M$, by definition of continuous foliation with smooth leaves.
A \emph{cross-section} to $\cF$ is a smooth co-dimension-$k$ disk inside a foliation box that
intersects each horizontal slice exactly once, transversely and with angle uniformly bounded from zero.

Then, for any pair of cross-sections $\Sigma$ and $\Sigma'$, there is a well defined \emph{holonomy map}
$\Sigma\to\Sigma'$, assigning to each $x\in\Sigma$ the unique point of intersection of $\Sigma'$
with the horizontal slice through $x$. The foliation is \emph{absolutely continuous} if all these
homeomorphisms map zero Lebesgue measure sets to zero Lebesgue measure sets.
That holds, in particular, for  the strong-stable and strong-unstable foliations of partially hyperbolic
$C^2$ diffeomorphisms and, in fact, the Jacobians of all holonomy maps are bounded by a uniform constant.

Let $f:M\to M$ be the time one map of an $C^2$ Anosov flow, his center stable and center unstable foliations are also absolutely continuous.
If we take two center manifolds $W^c(p)$ and $W^c(z)$ in the same strong stable manifold, from every point $t\in W^c(p)$ we can define the stable holonomy locally $h^s_{p,z}:I\subset W^c(p)\to W^c(z)$ and the Jacobian of $h^s_{p,z}$ vary continuously with the points $p$ and $z$. Actually the Jacobian is given by 
$$
\lim_{n\to\infty}\frac{Jf^n_c(t)}{Jf^n_c(h^s_{p,z}(t))},
$$
where $Jf^n_c(t)=\det Df^n\mid_{E^c_t}(t)$.
Analogously for the unstable holonomies.
\subsection{Measure disintegration}

Given a measurable partition $\cP$ of $M$, by Rokhlin disintegration theorem (see \cite{FET}) there exists a family of measures $\{\mu_P\}_{P\in \cP}$ such that for every measurable set $B\in M$ 
\begin{itemize}
\item $P\to \mu_P(B)$ is measurable,
\item $\mu_P(P)=1$ and 
\item $\mu(B)=\int_M \mu_P (B) d\tilde{\mu}(P)$.
\end{itemize}
Moreover, such disintegrantion is essentially unique \cite{Rok62}.

In general the partition by the invariant foliations is not measurable, to overcome this problem we disintegrate our measure locally as we explain next.

We call $C_x\in M$ a foliated box centred in $x\in M$ if there exists a continuous function 
$\Phi:[-1,1]^d\to C$ such that 
\begin{itemize}
\item $\Phi(0)=x$,
\item for every $y\in [-1,1]^{d-1}$, 
$$
W^c_{C_x}(\Phi(y,0)):=\Phi(y,[-1,1])\subset W^c(\Phi(y,0)),
$$
\item for every $y\in [-1,1]^u$, 
$$W^{cs}_{C_x}(\Phi(0_s,y,0)):=\Phi([-1,1]^s\times \{y\} \times [-1,1])\subset W^{cs}(\Phi(0_s,y,0)),$$
\item  for every $x\in [-1,1]^s$, 
$$
W^{cu}_{C_x}(\Phi(x,0_u,0)):=\Phi(\{x\}\times [-1,1]^u \times [-1,1])\subset W^{cu}(\Phi(x,0_u,0)),
$$
\end{itemize}
where $s$ is the dimension of $E^s$ and $u$ the dimension of $E^u$. 

So $\mu$ is an SRB measure if and only if for every $x$ the disintegration of $\mu\mid C_x$ given by the partition $\{W^{cu}_{C_x}(y)\}_{y\in C_x}$ gives measures absolutely continuous with respect to the Lebesgue measure on $W^{cu}(y)$. 

Given a foliated box $C_x$ lets call $\Sigma_x=\Phi([-1,1]^{d-1}\times\{0\})$ and $\pi:C_x\to \Sigma_x$ the natural projection given by $\pi(\Phi(x_s,y_u,t))=\Phi(x_s,y_u,0)$, observe that $\Sigma_x$ has a product structure $\Sigma^s\times \Sigma^u$ given by $\Phi$.
We say that $\mu$ has \emph{projective product structure} if $\pi_*\mu\mid C_x$ is absolutely continuous with a product measure $\mu^s\times \mu^u$, where $\mu^*$ is a measure on $\Sigma^*$ for $*=s,u$.
 
We say that a measure $\mu$ has a \emph{good product structure} if it has projective product structure and also for every foliated box the disintegration in center manifolds $\{W^c_{C_x}(y)\}_{y\in C_x}$ is absolutely continuous with respect to the Lebesgue measure.

Every SRB measure for the flow has a good projective product structure, the absolute continuity in the center manifold is just because the measure is invariant by the flow and the projective product structure is because of the absolute continuity of the center stable foliation (see \cite[Proposition~3.4]{ViY13} for a proof  of this fact).

The good product structure of the SRB measure will be the key property to prove theorem~\ref{t.sucinv} in section~\ref{s.proof}, that is the main tool for proving our result. 
\subsection{Linear holonomies}
\begin{definition}
We say that $A$ admits \textit{strong stable holonomies} if there exist, for every $p\textrm{ and }q \in M$ with $p\in W^{s}_{loc}(q)$,
 linear transformations $H^{s,A}_{p,q}:\real^{2d}\to \real^{2d}$ 
with the properties:
\begin{enumerate}
\item $H^{s,A}_{f^j(p),f^j(q)}=A^j(q)\circ H^{s,A}_{p,q}\circ A^j(p)^{-1}$ for every $j\geq 1$,\label{prop.holon}
\item $H^{s,A}_{p,p}=id$ and, for $w\in W^s(p)$, $H^{s,A}_{p,q}=H^{s,A}_{w,q}\circ H^{s,A}_{p,w}$,
\item there exists some $L>0$ (that does not depend on $p,q$) such that $\lVert H^{s,A}_{p,q}-id \rVert \leq L\d(p,q)^{\alpha}$.
\end{enumerate}
Moreover, these holonomies vary continuously with the base points $p$ and $q$.

These linear transformations are called \textit{strong stable holonomies}.
\end{definition}

Observe that if $p$ and $q$ are in the same strong stable manifold (not necessarily local) there exists some $n\geq 0$ such that $f^n(p)$ and $f^n(q)$ are in the same local strong stable manifold. So we can extend the holonomies to these points defining $H^{s,A}_{p,q}=A^n(q)^{-1}\circ H^{s,A}_{f^n(p),f^n(q)}\circ A^n(p)$, by property \eqref{prop.holon} this does not depend on the choice of $n$.

Analogously if $p\in W^{u}_{loc}(q)$ we have the \textit{strong unstable holonomies}. When there is no ambiguity we write $H^{s}_{p,q}$ instead of $H^{s,A}_{p,q}$.

\begin{remark}
The fiber bunched condition \eqref{eq.FB} implies the existence of the strong stable and strong unstable holonomies (see \cite{ASV13}). 
\end{remark}

Let $\proj$ be the real projective space, i.e: the quotient space of $\real^{2d}\setminus\{0\}$ by the equivalence 
relation $v\sim u$ if there exist $c\in\real\setminus\{0\}$, such that, $v=c u$. Every invertible linear transformation $B$ induces a
projective transformation, that we also denote by $B$, 
$$B:\proj\to \proj \quand B\left[v\right]=\left[B(v)\right].$$ 
We also denote by $F:M\times \proj \to M\times \proj$ the induced projective cocycle.
\subsection{Invariance principle}
Let $P:M\times \proj\to M$ be the projection to the first coordinate, $P(x,v)=x$, and let $m$ be a $F$-invariant measure on 
$M\times \proj$ such that $P_{\ast}m=\mu$.
By Rokhlin \cite{Rok62} we can disintegrate the measure $m$ into $\lbrace m_x\mbox{, }x \in M\rbrace$ with
 respect to the partitions 
 $$
 \mathcal{P}=\lbrace \{p\}\times \proj\mbox{, }p\in M\rbrace,
 $$

A measure $m$ that projects on $\mu$ is called:
\begin{itemize}
 \item[($a$)] $u$-\emph{invariant} if there exists a full $\mu$-measure subset $M^s\subset M$ such that $m_z=(H^s_{y,z})_{\ast}m_y$ for every $y,z\in M^s$ in the same strong-stable leaf;
 \item[($b$)] $s$-\emph{invariant} if there exists a full $\mu$-measure subset $M^u\subset M$ such that $m_z=(H^u_{y,z})_{\ast}m_y$ for every $y,z\in M^u$ in the same strong-unstable leaf.
\end{itemize}
If both $a$ and $b$ are satisfied we say that $m$ is $su$-\emph{invariant}.

We recall the following result whose proof can be found in \cite{ASV13}.
\begin{theorem}\label{t.IP}
Let $f:M\to M$ be a $C^1$ partially hyperbolic diffeomorphism and $A:M\mapsto \GL$ a linear cocycle defined over $f$. If $\mu$ is an $f$-invariant measure, such that $\lambda^+(A,\mu,x)=\lambda^-(A,\mu,x)=0$ almost everywhere, then every $F_A$-invariant measure $m$ such that $P_* m=\mu$ is $su$-invariant.
\end{theorem}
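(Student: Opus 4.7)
It suffices to prove $u$-invariance (stable-holonomy invariance of the disintegration); $s$-invariance then follows by applying the same argument to $f^{-1}$, which interchanges $W^s$ and $W^u$ while preserving the hypothesis $\lambda^+=\lambda^-=0$. Disintegrate $m=\int m_x\,d\mu(x)$ along the fiber partition $\cP$. The $F_A$-invariance of $m$ together with essential uniqueness of the Rokhlin disintegration yields $A(x)_*m_x=m_{f(x)}$ for $\mu$-a.e.\ $x$, and hence $A^n(x)_*m_x=m_{f^n(x)}$ for every $n\in\integer$ on a full $\mu$-measure set.

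The key identity is the holonomy telescope
$$
H^s_{y,z}=A^n(z)^{-1}\circ H^s_{f^n(y),f^n(z)}\circ A^n(y),
$$
valid for every $y,z$ in the same strong-stable leaf, together with $H^s_{f^n(y),f^n(z)}\to\id$ as $n\to\infty$ (from the H\"older/bunching control on holonomies recorded after Eq.~\eqref{eq.FB}). Combined with $A^n(x)_*m_x=m_{f^n(x)}$ this gives
$$
(H^s_{y,z})_*m_y=\lim_{n\to\infty}\bigl(A^n(z)^{-1}\bigr)_*m_{f^n(y)}.
$$
Since also $m_z=\lim_n (A^n(z)^{-1})_*m_{f^n(z)}$, what remains is to show that $m_{f^n(y)}$ and $m_{f^n(z)}$ become indistinguishable, after pushforward by $A^n(z)^{-1}$, along a suitable subsequence.

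To do this I would follow the Ledrappier--Avila--Viana strategy. Fix $\varepsilon>0$ and use Lusin's theorem to produce a compact $K\subset M$ with $\mu(K)>1-\varepsilon$ on which $x\mapsto m_x$ is weak-$*$ continuous. By Birkhoff applied to $\mathbf 1_K$ the orbits $\{f^n(y)\}$ and $\{f^n(z)\}$ simultaneously visit $K$ with positive frequency; along such a subsequence $n_k$, continuity on $K$ together with $d(f^{n_k}(y),f^{n_k}(z))\to 0$ gives weak-$*$ convergence $m_{f^{n_k}(y)}-m_{f^{n_k}(z)}\to 0$. The hypothesis $\lambda^+=\lambda^-=0$ is then used to show that the pushforward by $A^{n_k}(z)^{-1}$ does not amplify this weak-$*$ smallness: because both extremal exponents vanish, the product $\norm{A^{n_k}(z)}\cdot\norm{A^{n_k}(z)^{-1}}$ grows sub-exponentially, so the induced projective maps on $\proj$ have sub-exponential bi-Lipschitz distortion and preserve weak-$*$ convergence in the limit.

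The main obstacle is this final quantitative step: passing from sub-exponential bi-Lipschitz control of $A^{n_k}(z)^{-1}$ on $\proj$ to preservation of weak-$*$ convergence of $m_{f^{n_k}(y)}-m_{f^{n_k}(z)}$. Although projective distortion is sub-exponential, continuous test functions on $\proj$ need \emph{uniform} control of moduli of continuity along the subsequence; this is handled by an Egorov-type uniformization of the Oseledets/Birkhoff convergence on a set of almost full $\mu$-measure, chosen compatibly with $K$ so that simultaneous recurrence of both orbits $\{f^n(y)\}$, $\{f^n(z)\}$ persists. Once this uniformization is in place, the displayed identity yields $(H^s_{y,z})_*m_y=m_z$ for $\mu$-a.e.\ pair in the same strong-stable leaf, which is $u$-invariance; the symmetric argument applied to $f^{-1}$ then delivers $s$-invariance, and hence $su$-invariance.
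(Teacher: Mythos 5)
Be aware that the paper itself offers no proof of Theorem~\ref{t.IP}: it simply recalls the statement and cites \cite{ASV13} for the argument. So there is no in-paper proof to match, and your proposal must be judged on its own.

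Your opening moves are correct. $F_A$-invariance together with essential uniqueness of the Rokhlin disintegration gives $(A^n(x))_*m_x=m_{f^n(x)}$ almost everywhere; the telescoping identity $H^s_{y,z}=A^n(z)^{-1}\circ H^s_{f^n(y),f^n(z)}\circ A^n(y)$ is just a rearrangement of property (1) of holonomies; and $H^s_{f^n(y),f^n(z)}\to\id$ follows from property (3) together with $\d(f^n(y),f^n(z))\to 0$. The problem is the final step, which you flag but underestimate. After Lusin you have a compact $K$ with $\mu(K)>1-\varepsilon$ on which $x\mapsto m_x$ is uniformly continuous, but with an \emph{uncontrolled} modulus $\eta$. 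Along a simultaneous-recurrence subsequence you only know that $m_{f^{n_k}(y)}$ and $m_{f^{n_k}(z)}$ differ by at most $\eta(C\lambda^{n_k})$ in a weak-$*$/Wasserstein sense, while the bi-Lipschitz distortion of $A^{n_k}(z)^{-1}$ on $\proj$ is only bounded by $e^{o(n_k)}$ (sub-exponential, using $\lambda^{\pm}=0$ plus Egorov). The product $e^{o(n_k)}\cdot\eta(C\lambda^{n_k})$ need not vanish: a modulus as slow as $\eta(\delta)=1/\log(1/\delta)$ makes $\eta(\lambda^n)$ of order $1/n$, which is beaten by a sub-exponential factor. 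Egorov does not produce a quantitative modulus for $x\mapsto m_x$; for your estimate to close you would need a priori H\"older (or at least stretched-exponential) regularity of the disintegration, and that is simply not available.

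This obstruction is precisely why the known proofs (Ledrappier's, Avila--Viana's in \cite{Extremal}, and the partially hyperbolic extension in \cite{ASV13}) do not run such a direct pushforward estimate. Those arguments are structural: invariance under the holonomies on one side is obtained essentially for free for \emph{any} $F_A$-invariant lift of $\mu$, via a martingale/conditional-expectation argument on a symbolic or fibred model, and the hypothesis $\lambda^+=\lambda^-=0$ is then fed into a Furstenberg--Jensen type entropy inequality to force invariance on the other side. No modulus of continuity for $x\mapsto m_x$ enters. Your sketch is therefore not an alternative proof but an incomplete one, and the missing step is exactly the difficulty that the entropy approach is designed to circumvent.
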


\section{Proof of the main result}\label{s.proof}
We say that a disintegration of $m$ is \emph{$u/c$-invariant} if, for every $x$ and $y$ in the support of $\mu$ and in the same unstable manifold, for Lebesgue almost every $t\in W^c(x)$, ${H^u_{t,h(t)}}_* m_t=m_{h(t)}$ where $h(t)=W^c(y)\cap W^u(t)$.  
Analogously, we call a disintegration \emph{$s/c$-invariant} changing unstable by stable. We say that the disintegration is \emph{$su/c$-invariant} if the disintegration is both $u/c$ and $s/c$-invariant.

The primary tool to prove our main theorem is given by the next theorem:
\begin{theorem}\label{t.sucinv}
Let $f:M\to M$ be a $C^2$ partially hyperbolic, dynamically coherent diffeomorphism and $\mu$ an invariant measure with a good product structure, then if 
$m$ is an $su$-invariant measure then there exists a disintegration that is $su/c$-invariant. 
\end{theorem}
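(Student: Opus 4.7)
The plan is to select a version of the Rokhlin disintegration of $m$ that realizes the holonomy invariances pointwise along generic center plaques. The essential uniqueness of Rokhlin disintegration lets me modify the $\{m_y\}$ on a $\mu$-null set without changing $m$, and the good product structure of $\mu$ is precisely what converts the ``$\mu$-a.e.'' statement of $su$-invariance into a ``Lebesgue-a.e.\ on each center plaque'' statement.

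I first localize: cover $\supp(\mu)$ by countably many foliation boxes $C_x$, construct the desired disintegration inside each box, and then glue using essential uniqueness. Fix such a box with chart $\Phi$. By the good product structure, $\mu|_{C_x}$ is equivalent to $(\mu^s\otimes\mu^u)\otimes\text{Leb}^c$ via the identification of $C_x$ with $\Sigma^s\times\Sigma^u\times[-1,1]$. Let $M^s,M^u$ be the full $\mu$-measure sets supplied by the $su$-invariance of $m$. Applying Fubini with respect to this product decomposition, the set
\[
\Sigma_x^{\ast}=\bigl\{\sigma\in\Sigma_x:\;\pi^{-1}(\sigma)\cap M^s\cap M^u\text{ has full Lebesgue measure in }\pi^{-1}(\sigma)\bigr\}
\]
has full $\pi_\ast\mu$-measure. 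Thus on a $\mu$-generic center plaque, the stable and unstable holonomy-invariance already holds at Lebesgue-a.e.\ point of the plaque.

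Next, I pick a reference center plaque $W^c_{C_x}(x_0)$ through a base point $x_0\in M^s\cap M^u$ and propagate outward. For any $y\in C_x$ for which there exists $t_0\in W^c_{C_x}(x_0)$ with $y$ obtainable from $t_0$ by a strong-unstable holonomy followed by a strong-stable holonomy, define
\[
\tilde{m}_y=(H^{s,A}_{w(t_0),y})_{\ast}(H^{u,A}_{t_0,w(t_0)})_{\ast}\,m_{t_0},
\]
where $w(t_0)$ is the intermediate strong-unstable image. Because $m$ is $su$-invariant on $M^s\cap M^u$, the ``stable then unstable'' and ``unstable then stable'' compositions yield the same measure at every crossing point in $M^s\cap M^u$, and Step~2 ensures that $\tilde m_y$ can then be extended consistently to every point of the box. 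By construction $\tilde m_y=m_y$ on a full $\mu$-measure set, so the $\{\tilde m_y\}$ is a version of the Rokhlin disintegration. Moreover, $\tilde m$ is exactly equivariant under strong stable and strong unstable holonomies at every pair of endpoints inside the box. Translating this to the center-leaf formulation: for any $x,y\in\supp(\mu)$ in the same unstable manifold and every $t\in W^c(x)$ for which $h(t)=W^c(y)\cap W^u(t)$ lies in the box, the identity ${H^u_{t,h(t)}}_{\ast}\tilde m_t=\tilde m_{h(t)}$ holds; by iterating $f$ one reduces the global statement to the local one, and the condition ``Lebesgue-a.e.\ $t$'' is obtained automatically (in fact on every relevant $t$). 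The symmetric argument yields $s/c$-invariance simultaneously.

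The main obstacle is the commutativity of the two propagations on the skeleton: one must verify that transporting a measure first by a strong-unstable holonomy and then by a strong-stable one produces the same projective measure as the reversed order. This is exactly where the $su$-invariance of the \emph{original} $m$ is needed; combined with the local product structure of the stable/unstable/center foliations (which holds in a dynamically coherent partially hyperbolic setting), it forces the two propagations to agree on the full $\mu$-measure set $M^s\cap M^u$, from which the absolute continuity in the good product structure allows one to ``fill in'' the exceptional $\mu$-null set. A secondary, purely bookkeeping, obstacle is verifying that the local $\{\tilde m_y\}$ integrate to $m$, which reduces to essential uniqueness of the Rokhlin disintegration applied on each foliation box.
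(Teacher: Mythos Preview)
Your construction gives an everywhere-defined family $\tilde m$ that is $s/c$-invariant by design (propagating ``$u$ then $s$'' from the reference plaque makes the $s$-step composable along any strong stable leaf). The gap is in the claim that $\tilde m$ is simultaneously $u/c$-invariant. Your justification is that the ``$u$ then $s$'' and ``$s$ then $u$'' propagations agree at points of $M^s\cap M^u$; this is true, but it only tells you that the two families coincide $\mu$-almost everywhere. You still have two everywhere-defined families, $\tilde m^{us}$ and $\tilde m^{su}$, each invariant under one holonomy, agreeing on a full-measure set. Nothing in your argument forces them to agree on \emph{every} center plaque in $\supp(\mu)$, which is exactly what the definition of $su/c$-invariance requires (and what the later perturbation lemma needs, since it works on a specific periodic center leaf). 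Fubini plus absolute continuity cannot ``fill in'' this null set: the exceptional set of plaques where the two families might differ can be $\pi_*\mu$-null yet still meet $\supp(\mu)$.

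The paper closes this gap by a continuity argument that you are missing. It does not compare the fiberwise measures $m^s_t$ and $m^u_t$ directly; instead it forms the center-aggregated measures $m^{cs}_x=\int_{D^c} m^s_t\,d\mu^c_x(t)$ and $m^{cu}_x$ and proves that the maps $x\mapsto m^{cs}_x$ and $x\mapsto m^{cu}_x$ are \emph{continuous} on the transversal $\Sigma$. This uses the uniform convergence of the base holonomies $h^s,h^u$ to the identity, the uniform convergence of the linear holonomies $H^s,H^u$ to the identity, and the continuity of the center densities $\rho$. Once both aggregates are continuous and agree $\mu_\Sigma$-a.e., they agree on all of $\supp(\mu_\Sigma)$; Rokhlin uniqueness on each fixed plaque then yields $m^s_t=m^u_t$ for Lebesgue-a.e.\ $t$ on \emph{every} plaque through the support. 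This continuity step is the substantive idea you need to add.
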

\begin{proof}
For any topological space $N$, we denote by $\cM(N)$ the space of measures on $N$ with the weak$^*$ topology.

There exist $M^s$ and $M^u$ of total measure with $s$-invariance and $u$-invariance respectively. Take $M'=M^s\cap M^u$, $x\in \supp(\mu)$
and a foliated box $C_x\subset M$; via a local chart we can write $C=\Sigma\times D^c_R$ where $\Sigma$ is a transversal section to the center foliation and $D^c_R$ is a disc of radius $R$ 
in a center manifold, let $\pi:C_x\to \Sigma$ be the natural projection given by the center discs, observe that the center stable and center unstable manifolds gives a product structure of $\Sigma=\Sigma^s\times \Sigma^u$ and by hypothesis $\mu_{\Sigma}\sim\mu^s\times \mu^u$.

By the absolute continuity of the center foliation and the continuity of the Jacobians of the stable and unstable holonomies of $f$ we have that 
the disintegration $\Sigma\to \cM(D^c_R),\quad x\mapsto \mu^c_x$ is of the form 
$\mu^c_x=\rho \mu^c$ where $\mu^c$ is the Lebesgue measure in $D^c_R$ and $\rho$ is continuous.

We write $x\rels y$ for $x,y\in \Sigma$ in the same center stable manifold, and $x\relu y$ if they are in the same center unstable manifold.
Take $r>0$ smaller than $R$ such that for every $x\rels y$, $h^s_{x,y}:D^c_r\to D^c_R$ is well defined. 

Fix some $x^s\in \Sigma^s$ such that $\mu^u\left(\{x^s\}\times \Sigma^u\cap \pi(M')\right)=1$ and fix some $x^u\subset \Sigma^u$ such that $\mu^c_{(x^s,x^u)}((x^s,x^u)\times D^c_R\cap M')=1$, by the absolute continuity in the center direction this implies that Lebesgue almost every $t\in (x^s,x^u)\times D^c_R$ is in $M'$; for simplicity denote $x_0=(x^s,x^u)$.

Take $\epsilon>0$ such that 
$$
D^c_{\epsilon}\subset \bigcap_{z\relu y\rels x_0}h^u_{y,z} h^s_{x_0,y}(D^c_r).
$$

Lets call $m^c_x=\int_{D^c_r}m_t d\mu^c_x(t)$. Taking $x\rels y$ for $t\in D^c_r$ lets call $h(t)=h^s_{x,y}(t)$ and define $(\cH^s_{x,y}m^c_x)_{h(t)}={H^s_{(x,t),(y,h(t))}}_* m_t$. If $m_t$ is well defined for almost every $t\in D^c_r$
then by the absolute continuity of the holonomies $(\cH^s_{x,y}m^c_x)_{t'}$ is defined for almost every $t'\in h^s_{x,y}(D^c_r)$. Analogously we can define $(\cH^u_{x,y}m^c_x)_{t'}$

Now define a new disintegration of $m$ in the box $\Sigma\times D^c_\epsilon$ in the following way: 
Fix the restriction of the original disintegration defined for almost every $(x_0,t)\in \{x_0\}\times D^c_r$, (in particular $m^c_{x_0}$ is well defined) 
extend the disintegration to every $y\relu x_0$ and $t'\in  h^u_{x_0,y}(D^c_r)$ by $(\cH^u_{x_0,y}m^c_{x_0})_{t'}$. Lets call this disintegration $m^c_y$, and then extend it for every $z\rels y\relu x_0$ by
$(\cH^s_{y,z}m^c_y)_{t''}$ for almost every $t''\in h^s_{y,z} h^u_{x_0,y}(D^c_r)$. 
Now by definition of $\epsilon$ this disintegration is well defined in $\Sigma\times D^u_\epsilon$, and it coincides with the original disintegration almost everywhere, lets denote this
new disintegration by $t\mapsto m^s_t$. 

By construction this disintegration is $s/c$-invariant. We claim that this new disintegration, 
with respect to $\{x\}\times D^c_{\epsilon}\times \proj$, defined by 
$$
\Sigma\to \cM(D^c_r\times \proj),\quad m^{cs}_x=\int_{D^c_r}m^s_t d\mu^c_x(t),$$
is continuous.
Assuming this claim, we can define analogously, reducing $\epsilon$ if necessary, a disintegration $x\mapsto m^u_x$ that is $u/c$-invariant on $\Sigma\times \epsilon$. 
We have that $m^{cs}_x=m^{cu}_x$ for Lebesgue almost every $x\in \Sigma$, then as both are continuous we have the equality for every $x\in \Sigma$.
Then by the uniqueness of the Rokhlin disintegration, for every $x\in \Sigma$, $m^s_t=m^u_t$ for $\mu^c$ almost every $t\in D^c_{\epsilon}$. As this boxes cover $\supp(\mu)$ we conclude the theorem.

We are left to prove the claim.

We will prove that $x\mapsto m^{cs}_x$ is uniformly continuous varying $y\rels x$ and $z\relu y$.
Fix some continuous $\varphi:D^c_\epsilon \times \proj\to \real$ and let $y_n\to x$, $y_n\rels x$. 
\begin{equation*}
 \int \varphi (t,v) d m^{cs}_{y_n}=\int_{D^c_\epsilon} \int_{\proj} \varphi (t,v)dm^s_{y_n,t}(v) d\mu^c_{y_n}(t)
 \end{equation*}
\begin{equation*}
\int \varphi (t,v) d m^{cs}_{y_n}  =\int_{D^c_\epsilon} \int_{\proj} \varphi (t,v)d {H_{n,t}}_*m^s_{x,h_n(t)}(v) d\mu^c_{y_n}(t),
\end{equation*}
where $h_n(t)=h^s_{y_n,x}(t)$, and $H_{n,t}=H^s_{(x,h_n(t)),(y_n,t)}$ so we can write the integral as
$$
\int_{D^c_\epsilon} \int_{\proj} \varphi (t,H_{n,t} v)d m^s_{x,h_n(t)}(v) \rho(y_n,t) d\mu^c(t),
$$
changing variables and we have 
$$
\int_{h_n(D^c_\epsilon)} \int_{\proj} \varphi (h_n^{-1}(t),H_{n,h_n^{-1}(t)} v)d m^s_{x,t}(v) J_{\mu^c}h_n^{-1}(t) \rho(y_n,h_n^{-1}(t)) d\mu^c(t),
$$
where $ J_{\mu^c}h_n^{-1}$ is the Jacobian of $h_n^{-1}$ with respect to the Lebesgue measure $\mu^c$. Hence using that $h_n^{-1}\to \id_{W^c(x)}$, $J_{\mu^c}h_n^{-1}\to 1$, 
$H_{n,t}\to \id_{\proj}$ uniformly and the continuity of $\rho$ and $\varphi$, we conclude that $\int \varphi (t,v) d m^{cs}_{y_n}\to \int \varphi (t,v) d m^{cs}_{y}$.

For $z_n\relu x$, such that $z_n\to x$, we take $z_n\rels y_n\relu x_0$ and $x\rels x'\relu x_0$, by construction $y_n\to x'$ so 
$$
\int \varphi (t,v) d m^{cs}_{y_n}\to \int \varphi (t,v) d m^{cs}_{x'},
$$
observe that, 
$$
\int \varphi (t,v) d m^{cs}_{z_n}  =\int_{D^c_\epsilon} \int_{\proj} \varphi (t,v)d {H^u_{n,t}}_*m^s_{z_n,h^u_n(t)}(v) d\mu^c_{z_n}(t),
$$
where $H^u_{n,t}$ and $h^u_n$ are defined as before just changing stable by unstable in the corresponding point.
Therefore, using the same calculations as before, it follows that
$\int \varphi (t,v) d m^{cs}_{z_n}\to \int \varphi (t,v) d m^{cs}_{x}$.

This proves the claim and concludes the proof.
\end{proof}

As we mention before, the support of the SRB measure $\mu$ coincide with a non-trivial attractor of the spectral decomposition of $\Omega(\phi_t)$. This implies that we have density of periodic orbits and the homoclinic intersection of this periodic orbits is also dense and contained in the support of $\mu$.  

Now we prove that after a perturbation we have stably positive Lyapunov exponents for the cocycle. For this lets first find some center manifold with positive Lyapunov exponents.

Observe that, as the center manifolds of $f$ are the lines flows of $\phi_t$, the density of periodic orbits of the flow implies density of compact invariant center leaves.
Moreover, the restriction of $f$ to one of this compact leaves, $W^c(p)$, is a rotation in $\real/T\integer$ where $T$ is the period $p$ by the flow. So after a $C^1$ change of coordinates
we can suppose that $f_p=f\mid_{W^c(p)}=r_{\frac{1}{T}}$ where 
$$r_{\frac{1}{T}}:\real/\integer\to \real/\integer,\quad t\mapsto t+\frac{1}{T}.$$
The restriction of the cocycle to $W^c(p)$ is a cocycle over a rotation with Lebesgue $\mu^c_p$ as invariant measure.

Now we are going to define $h:W^c(p)\to W^c(p)$ that is a composition of stable and unstable holonomies (see figure~\ref{figure}).

Fix a flow periodic point $p\in \supp(\mu)$, lets take $z\in W^{cs}(p)\cap W^{cu}(p)$, by invariance of the center stable and center unstable manifolds $\phi_t(z)\in W^{cs}(p)\cap W^{cu}(p)$ for every $t\in \real$. Observe also that the orbit of $z$ is non recurrent.

\begin{figure}[ht]
    \centering
    \includegraphics[scale=0.6]{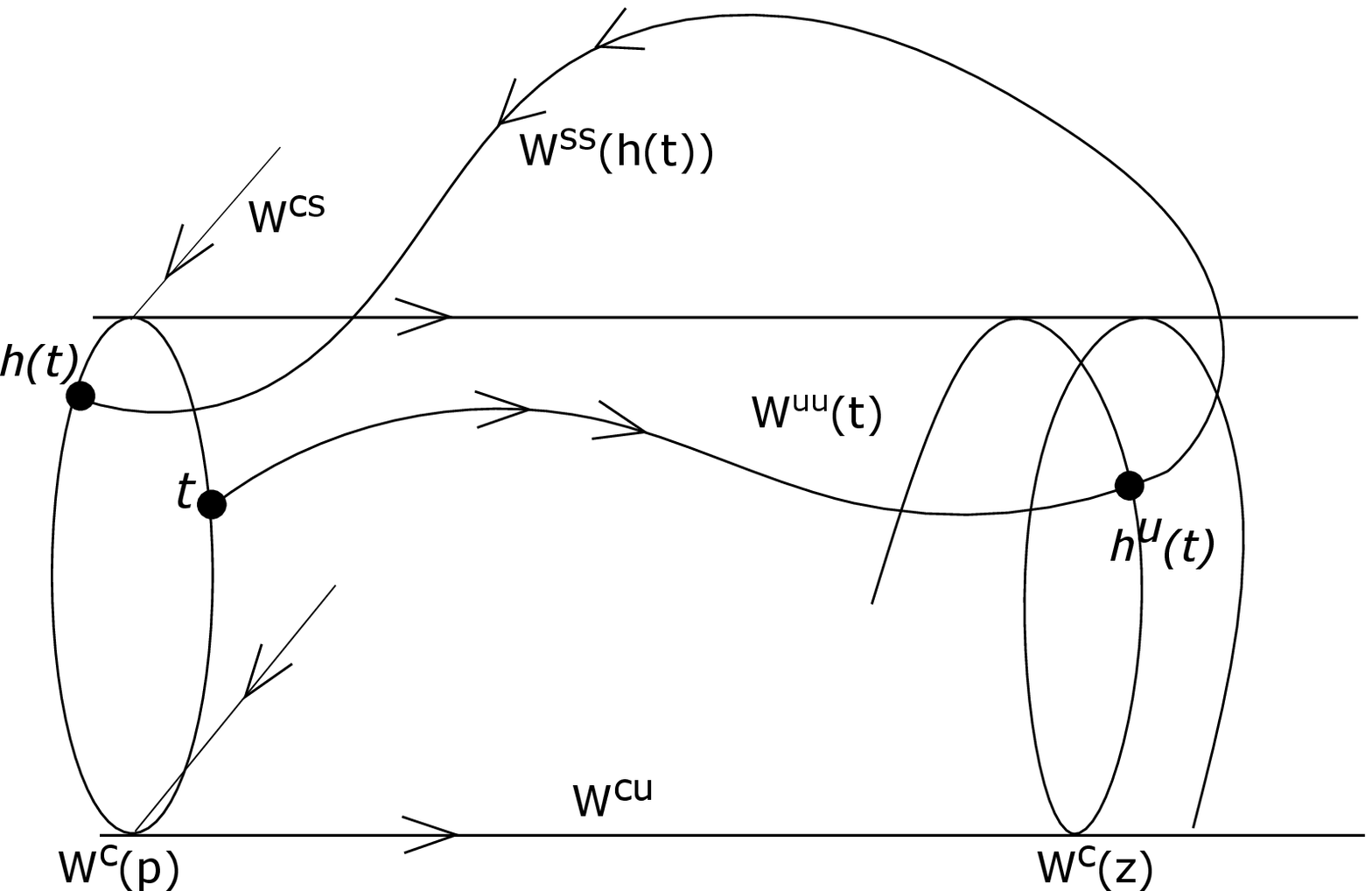}
    \caption{definition $h:W^c(p)\to W^c(p)$.}\label{figure}
\end{figure}

Suppose that actually $z\in W^u(p)$ and define 
$h(p)=W^c(p)\cap W^s(z)$, now for $t\in \left[ 1,T \right]$ define 
$h(\phi_t(p)):=\phi_t(h(p))$ and observe that by the invariance of the stable and unstable manifolds $\phi_t(h(p))\in W^c(\phi_t(p))\cap W^s(\phi_t(z))$ and $\phi_t(z)\in W^u(\phi_t(p))$. 

So $h$ is a composition of stable and unstable holonomies. Also in the circle coordinates, identifying $p$ with $0$,
$$
h:\real\integer\to \real \integer,\quad t\mapsto t+\omega,
$$ 
where $\omega$ is such that $\phi_\omega (p)=h(p)$.

Call $h^u_{p,z}:W^c(p)-\{p\}\to W^c(z)$ the map given by $t\mapsto \phi_t(z)$, by the same reasoning as before this map is given by an unstable holonomy, if there is no risk of ambiguity we just write $h^u=h^u_{p,z}$. This map is not well defined in $0$ because $\phi_T(z)\neq z$.

Observe that as the center stable and center unstable are dense in $\supp(\mu)$ we can find points $z_1,\dots,z_d$ as before such that any pair $z_i$, $z_j$, with $i\neq j$, are in different orbits of the flow. Hence, we can define maps $h_1,\dots,h_d$ with the properties above.

\begin{lemma}\label{l.posrest}
 There exists $A'$ arbitrarily close to $A$ such that for some center leave $W^c(p)$, $L(A_p,f_p,\mu^c_p)>0$.
\end{lemma}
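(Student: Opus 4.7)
My plan is to produce the desired $A'$ inside a thin tubular neighbourhood of the chosen compact centre leaf $W^c(p)$; since $W^c(p)$ has measure zero in $M$, any Hölder-small perturbation of $A$ along $W^c(p)$ will extend to a Hölder-small perturbation of $A$ on $M$. Under the identification $W^c(p) \cong \real/\integer$ from the setup, the restricted cocycle $A_p = A|_{W^c(p)}$ is an $\alpha$-Hölder $\Sp$-valued cocycle over the circle rotation $f_p = r_{1/T}$, with invariant measure $\mu^c_p$ the Haar measure on $W^c(p)$.

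The first step is purely one-dimensional: in any $C^\alpha$-neighbourhood of $A_p$ I want $\tilde A : W^c(p) \to \Sp$ with $L(\tilde A, f_p, \mu^c_p) > 0$. One route is to invoke the density of positive-exponent cocycles in Hölder topologies established by Avila \cite{Av11} for $\SL$ and extended to $\Sp$ by Xu \cite{Xu15}, applied here to the base $f_p$. A more concrete approach in our setting exploits the $d$ holonomies $h_1, \ldots, h_d$ with fibre lifts $H_1, \ldots, H_d \in \Sp$ just constructed: since these decouple enough symplectic directions at the fibre over $p$, distributing many small $X_i \in \mathfrak{sp}$ corrections aligned with these directions over a long orbit segment of $f_p$ produces cumulative hyperbolic drift in the long product $\tilde A^N(t_0)$, while each bump can be made of arbitrarily small Hölder norm. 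Continuity then propagates $\lambda^+(t_0)>0$ to an open neighbourhood of $t_0$ and yields $L(\tilde A, f_p, \mu^c_p) > 0$.

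To extend $\tilde A$ to $M$, I pick a tubular neighbourhood $\pi : U \to W^c(p)$ of $W^c(p)$ inside $M$ together with a fixed smooth cutoff $\chi : M \to [0,1]$ supported in $U$ and equal to $1$ on $W^c(p)$, and set
\begin{equation*}
A'(x) = \exp\bigl(\chi(x)\,\log\bigl(\tilde A(\pi(x))\, A_p(\pi(x))^{-1}\bigr)\bigr)\, A(x).
\end{equation*}
A routine Hölder estimate gives $\norm{A' - A}_\alpha \leq C\, \norm{\tilde A - A_p}_\alpha$ for a constant $C$ depending only on $\chi$, $A$, and the tubular chart; hence $A'$ is as close to $A$ as desired in $H^\alpha(M)$, and since $A'|_{W^c(p)} = \tilde A$ the conclusion $L(A'_p, f_p, \mu^c_p) > 0$ follows.

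The hard part is the first step. Zero-exponent is not obviously unstable under small $C^\alpha$-perturbations: any infinitesimal change at a single point is overwhelmed by the global behaviour of the long product, so the amplification must be carefully tuned. In particular, the directions $X_i$ have to be chosen so that the successive bumps do not cancel each other at the level of commutators, which is precisely where the freedom provided by the independent holonomies $H_1, \ldots, H_d$ (preventing $\Sp$ from acting with a common compact invariant subgroup at the fibre over $p$) is crucial.
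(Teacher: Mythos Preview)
Your extension step (the cutoff/tubular-neighbourhood construction) is fine and makes explicit something the paper leaves implicit. The difficulty is entirely in the first step, as you yourself flag.

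The citation route via Avila/Xu is essentially what the paper does, but only in one of two cases. The paper distinguishes whether the rotation number $1/T_p$ is irrational or rational. When it is irrational, $(f_p,\mu^c_p)$ is aperiodic and Xu's density theorem applies directly to give a nearby $A'$ with $L(A'_p,f_p,\mu^c_p)>0$. When every periodic orbit in $\supp(\mu)$ has rational $1/T_p$, Lebesgue on $W^c(p)$ is \emph{not} ergodic for $f_p$ (it decomposes into a continuum of finite orbits) and the density theorem as invoked does not immediately yield positive \emph{integrated} exponent. The paper handles this second case differently: since periods $T_p$ can be taken arbitrarily large, one uses the global one-parameter family $A_\theta=R_\theta A$ with $R_\theta$ the symplectic rotation by angle $\theta$; by \cite[Lemma~3.4]{Xu15} the set of $\theta$ with positive exponent over an $n$-periodic base is $O(1/n)$-dense, so choosing $p$ with large $T_p$ allows $\theta$ arbitrarily small. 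You do not make this case distinction, and your sketch does not cover the rational case.

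Your alternative ``concrete'' approach is not a proof, and it misreads the role of the holonomies. The maps $h_1,\dots,h_d$ and their fibre lifts $H_i$ are set up in the paper for a different purpose: they are used later, in the proof of Lemma~\ref{l.nonsu}, to break invariant-subspace configurations of an $su$-invariant measure via transversality perturbations. They are compositions of stable and unstable holonomies landing back on $W^c(p)$; they do not act along the $f_p$-orbit and there is no mechanism by which ``aligning bumps with these directions'' creates exponential growth of a long product $\tilde A^N(t_0)$. Finally, the sentence ``continuity then propagates $\lambda^+(t_0)>0$ to an open neighbourhood of $t_0$'' is false: Lyapunov exponents are measurable invariant functions of the base point, not continuous ones. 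For irrational $1/T_p$ ergodicity gives constancy a.e.\ for free; for rational $1/T_p$ the exponent genuinely depends on the orbit and nothing propagates.
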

\begin{proof}
 Suppose that there exists $p\in \supp(\mu)$ a periodic point for the flow such that $\frac{1}{T_p}$ is irrational then the Lebesgue measure is non periodic, so by \cite{Xu15}, 
 we can perturb $A$ such that $L(A_p,f_p,\mu^c_p)>0$.
 
 If for every periodic point $p\in \supp(\mu)$, $\frac{1}{T_p}$ is rational we have that as $T_p$ can be taken arbitrarily large, we have rational rotations with arbitrarily large period.
Now, taking 
$$
A_{\theta}=\left(\begin{array}{cc}
\cos(\theta)\id_d & \sin(\theta)\id_d\\
-\sin(\theta)\id_d &\cos(\theta)\id_d \ \\
\end{array}\right)
A,
$$ 
for a generic $A$ we have that for an $n$-periodic point there exists an $O(\frac{1}{n})$ dense set with $L(A_{\theta}(t))>0$ (see \cite[Lemma~3.4]{Xu15}),
so taking $p$ such that $T_p$ is very large we can take $\theta$ small such that $A_\theta$ is close to $A$.
 \end{proof}

We recall two results whose proofs can be found in \cite{Pol16}.
\begin{lemma}\label{l.open}
 Suppose there exists some sequence $A_k$ converging to $A$, such that $L(A_k,\mu)=0$ then $A$ admits some some $su$-invariant measure $m$.
\end{lemma}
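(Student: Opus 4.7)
The plan is to produce $m$ as a weak-$*$ accumulation point of $F_{A_k}$-invariant measures on $M\times\proj$, each of which will be $su$-invariant for its respective cocycle by the invariance principle, and then to transfer $su$-invariance to the limit using that holonomies vary continuously with the cocycle.

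For each sufficiently large $k$, $A_k$ remains fiber bunched since \eqref{eq.FB} is open in $H^\alpha(M)$, and so its holonomies $H^{s,A_k}$ and $H^{u,A_k}$ are well defined. By compactness of $\proj$, a Krylov--Bogolyubov construction applied to $F_{A_k}$ produces an $F_{A_k}$-invariant probability measure $m_k$ on $M\times\proj$ with $P_*m_k=\mu$. Since $A_k$ takes values in $\Sp$, its Lyapunov spectrum for $\mu$ is symmetric about zero; the hypothesis $L(A_k,\mu)=0$ forces the top exponent to vanish, and hence all exponents vanish, so that $\lambda^+(A_k,\mu,x)=\lambda^-(A_k,\mu,x)=0$ for $\mu$-a.e.\ $x$. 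Theorem~\ref{t.IP} then yields that $m_k$ is $su$-invariant for the cocycle $A_k$. Passing to a weak-$*$ convergent subsequence $m_{k_j}\to m$, the limit projects to $\mu$ by continuity of $P$ and is $F_A$-invariant because $F_{A_k}\to F_A$ uniformly.

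The main obstacle is verifying that $m$ is $su$-invariant for $A$, since Rokhlin disintegrations are not stable under weak-$*$ limits and $su$-invariance is ostensibly a statement about conditional measures. The standard resolution, following \cite{ASV13,Pol16}, is to rephrase $u$-invariance of each $m_{k_j}$ as a family of integral identities
\[
\int \varphi\,dm_{k_j} = \int \bigl(\varphi\circ(\id\times H^{s,A_{k_j}})\bigr)\,dm_{k_j}
\]
against continuous test functions $\varphi$ supported in foliation boxes of $W^s$. On the open set of fiber bunched cocycles the telescoping series defining the stable holonomies converges uniformly in the cocycle, so $H^{s,A_{k_j}}_{p,q}\to H^{s,A}_{p,q}$ uniformly on compact sets of pairs in common local stable plaques; both sides of the identity therefore pass to the weak-$*$ limit, giving the analogous identity for $m$ with respect to $H^{s,A}$ and hence $u$-invariance of $m$. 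The symmetric argument using $H^{u,A_{k_j}}\to H^{u,A}$ delivers $s$-invariance, so $m$ is $su$-invariant, completing the proof.
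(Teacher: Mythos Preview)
Your argument is correct and is precisely the standard one: take $F_{A_k}$-invariant lifts $m_k$ via Krylov--Bogolyubov, apply the invariance principle (Theorem~\ref{t.IP}) to obtain $su$-invariance of each $m_k$, pass to a weak-$*$ accumulation point, and transfer $su$-invariance to the limit using uniform convergence $H^{*,A_k}\to H^{*,A}$ of the holonomies on the open set of fiber bunched cocycles. The paper does not supply its own proof of this lemma but defers to \cite{Pol16}, where exactly this argument is carried out; your sketch matches that reference in both structure and the key technical point (recasting $su$-invariance as integral identities against test functions so that it survives weak-$*$ limits).
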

\begin{lemma}\label{l.transv_finite}
Let $(E,\omega)$ be a $2d$-dimensional symplectic vector space and let $\{(V_j, W_j): 1\leq j\leq m\}$ be a finite collection of pairs subspaces of $E$ with
complementary dimensions (i.e., $\textrm{dim}(V_j) + \textrm{dim}(W_j) = 2d$ for all $1\leq j\leq m$). Then, there exists a symplectic automorphism $\sigma$ arbitrarily close 
to the identity such that
$$\sigma(V_j)\cap W_j =\{0\} \ \ \forall\, j=1, \dots, m.$$
\end{lemma}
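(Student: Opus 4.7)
The plan is to reduce the lemma to the single-pair case via an openness-plus-density argument, then handle the single pair by induction using symplectic transvections. For the reduction, fix bases of $V_j$ and $W_j$ and note that $\sigma(V_j)\cap W_j = \{0\}$ is the non-vanishing of a determinantal polynomial $P_j(\sigma)$ in the entries of $\sigma$, so each $U_j := \{\sigma\in\Sp : \sigma(V_j)\cap W_j = \{0\}\}$ is Zariski open in $\Sp$. Since $\Sp$ is a connected smooth real algebraic group, a nonempty Zariski open subset of $\Sp$ is Euclidean open and dense (its complement is a proper real-analytic subvariety of positive codimension in the manifold $\Sp$). So it suffices to prove each $U_j$ is nonempty; the intersection $\bigcap_j U_j$ is then open dense in $\Sp$ and contains elements arbitrarily close to $\id$.

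For a single pair $(V,W)$ with $\dim V + \dim W = 2d$, let $k = \dim(V\cap W)$ and induct on $k$. If $k = 0$ take $\sigma = \id$. For $k\geq 1$, I will construct a symplectic transvection $\tau$, arbitrarily close to $\id$, with $\dim(\tau(V)\cap W) = k-1$; composing with the output of the inductive hypothesis applied to $(\tau(V),W)$ then gives $\sigma\in\Sp$, close to $\id$, with $\sigma(V)\cap W = \{0\}$. Pick any nonzero $v\in V\cap W$. Since $V+W$ has codimension $k\geq 1$ in $E$ and $v^{\perp_\omega} := \{u\in E : \omega(v,u)=0\}$ is a proper hyperplane, and a real vector space is never a union of two proper subspaces, one can choose $a\in E$ with $a\notin V+W$ \emph{and} $\omega(v,a)\neq 0$. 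The map
$$\tau_s(x) := x + s\,\omega(x,a)\,a$$
is symplectic for every $s\in\real$ (direct from antisymmetry of $\omega$) and tends to $\id$ as $s\to 0$.

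The key step is to verify the rank jump: for small $s\neq 0$, $\dim(\tau_s(V)\cap W) = k-1$. Consider the linear map $\phi_s : V\to E/W$, $v'\mapsto [\tau_s(v')] = [v'] + s\,\omega(v',a)\,[a]$; since $\tau_s$ is invertible, $\dim(\tau_s(V)\cap W) = \dim\ker\phi_s$. Splitting $V = (V\cap a^{\perp_\omega}) \oplus \langle v\rangle$ (valid because $\omega(v,a)\neq 0$), one checks that for $s\neq 0$ the image of $\phi_s$ equals $\phi_0(V\cap a^{\perp_\omega}) + \langle[a]\rangle$, of dimension $(\dim V - k)+1$: indeed $[a]\notin \phi_0(V) = (V+W)/W$ because $a\notin V+W$, and $\phi_0|_{V\cap a^{\perp_\omega}}$ has rank $\dim V - k$ because its kernel $V\cap W\cap a^{\perp_\omega}$ has dimension $k-1$ (the functional $\omega(\cdot,a)$ is nonzero on $V\cap W$). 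Hence $\dim\ker\phi_s = k-1$, completing the inductive step. This rank-jump computation, which requires both $a\notin V+W$ and $\omega(v,a)\neq 0$ simultaneously, is the main obstacle; the two conditions are compatible exactly because a real vector space is not a union of two proper subspaces.
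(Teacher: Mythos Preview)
Your proof is correct. The paper itself does not prove this lemma: it is stated there together with Lemma~\ref{l.open} under the heading ``We recall two results whose proofs can be found in \cite{Pol16}'', so there is no in-paper argument to compare against. Your write-up supplies a self-contained proof, which is more than the paper offers.

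A few remarks on your argument. The Zariski reduction is clean and correctly invokes irreducibility of $\Sp$ (as a connected real algebraic group) to pass from ``each $U_j$ nonempty'' to ``$\bigcap_j U_j$ open and dense''. For the single-pair step, the transvection computation is right: with $v' = v'' + cv$ for $v'' \in V\cap a^{\perp_\omega}$ one has $\phi_s(v') = [v''] + sc\,\omega(v,a)[a]$ (using $v\in W$), so indeed $\operatorname{im}\phi_s = \phi_0(V\cap a^{\perp_\omega}) + \langle [a]\rangle$ for every $s\neq 0$, not just small $s$. The dimension count $\dim\ker\phi_s = k-1$ then follows exactly as you wrote. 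Note that for the Zariski reduction you only need each $U_j$ to be nonempty, so the ``arbitrarily close to $\id$'' part of the inductive construction is stronger than required at that stage; it is still needed, however, to run the induction itself (composing the transvection with the map from the inductive hypothesis), so nothing is wasted.
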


As a consequence if the subspaces $V$ and $W$ have not complementary dimensions we can always make $\dim(\sigma(V)+W)=\dim V+\dim W$.

\begin{lemma}\label{l.nonsu}
 There exists $A'$ arbitrarily close to $A$ such that $A'$ does not admit any $su$-invariant measure.
\end{lemma}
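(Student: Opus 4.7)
The plan is to combine the $su/c$-invariance of Theorem \ref{t.sucinv} with a positive centre exponent on a periodic leaf, and then use the transversality Lemma \ref{l.transv_finite} to destroy the invariance through a small perturbation. First, Lemma \ref{l.posrest} reduces us to the case $L(A_p,f_p,\mu^c_p)>0$ on some periodic centre leaf $W^c(p)$; this is an open condition, so it persists under further sufficiently small perturbations of $A$.

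Suppose, for contradiction, that every neighbourhood of $A$ contains a cocycle admitting an $su$-invariant measure; by a standard weak-$\ast$ limit argument $A$ itself admits an $su$-invariant measure $m$. Theorem \ref{t.sucinv} provides an $su/c$-invariant disintegration $\{m_t\}$, and the maps $h_i:W^c(p)\to W^c(p)$ are compositions of stable and unstable holonomies, so the $su/c$-invariance reads
\begin{equation*}
m_{h_i(t)} = (H_i(t))_{\ast}\, m_t \quad \text{for Lebesgue-a.e.\ } t\in W^c(p),
\end{equation*}
where $H_i(t) = H^{s,A}_{\phi_t(z_i),h_i(t)}\circ H^{u,A}_{\phi_t(p),\phi_t(z_i)} \in \Sp$, for $i=1,\dots,d$.

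Because $L(A_p,f_p,\mu^c_p)>0$, Oseledets' theorem on $W^c(p)$ yields a measurable symplectic splitting $\real^{2d} = E^+(t)\oplus E^0(t)\oplus E^-(t)$. Invariance-principle arguments for measures over cocycles with some non-vanishing Lyapunov exponent (as used in \cite{ASV13, Pol16}) then force the fibre measures $m_t$ to be supported on the associated measurable Oseledets flag. Consequently $\supp(m_t)$ is a finite union of subspaces, and the identity above forces each $H_i(t)$ to send the subspaces making up $\supp(m_t)$ into those making up $\supp(m_{h_i(t)})$. Fixing one good point $t_0\in W^c(p)$, this yields a finite list of pairs $(V_j,W_j)$ of subspaces of $\real^{2d}$ for which $H_i(t_0)V_j \subset W_j$.

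Finally, since the orbits $\phi_\real(z_i)$ were chosen pairwise disjoint and disjoint from $W^c(p)$, I can perturb $A$ inside small flow-boxes around each $z_i$ independently, without affecting $A|_{W^c(p)}$ (hence without changing the Oseledets flag) or the other holonomies $H_j(t_0)$, $j\neq i$. Such a localised perturbation multiplies $H_i(t_0)$ by an arbitrary element of $\Sp$ close to the identity. Applying Lemma \ref{l.transv_finite}, together with its remark covering non-complementary dimensions, to the finite collection of pairs yields a perturbed cocycle $A'$ for which every inclusion $H_i(t_0)V_j\subset W_j$ fails. Since any $su$-invariant measure for $A'$ would, by the same Oseledets-flag argument, satisfy such inclusions, no $su$-invariant measure can exist for $A'$, a contradiction. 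The main obstacle is ensuring that the perturbation simultaneously preserves $L(A_p)>0$ and decouples the $d$ linear holonomies $H_i(t_0)$; this is exactly why the $d$ distinct homoclinic orbits $z_1,\dots,z_d$ are chosen pairwise disjoint and disjoint from $W^c(p)$.
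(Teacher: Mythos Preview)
Your outline follows the paper's strategy at the coarse level---invoke Lemma~\ref{l.posrest}, apply Theorem~\ref{t.sucinv}, use the Oseledets decomposition on $W^c(p)$ to confine the fibre measures to finitely many subspaces, and perturb the linear holonomies at homoclinic points via Lemma~\ref{l.transv_finite}---but the closing step has a genuine gap. Your list of pairs $(V_j,W_j)$ is extracted from the support of the \emph{given} measure $m$, and your perturbation breaks precisely those inclusions. You then assert that any $su$-invariant measure for $A'$ would ``satisfy such inclusions'', but a different measure $m'$ could distribute its support differently among the Oseledets subspaces and hence satisfy a \emph{different} set of inclusions that you have not broken. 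You have shown the old $m$ is no longer $su/c$-invariant for $A'$, not that $A'$ admits no $su$-invariant measure at all. (Also, the negation of the lemma is that \emph{some} neighbourhood of $A$ consists entirely of cocycles with an $su$-invariant measure, not merely that every neighbourhood contains one; this is cosmetic, but worth fixing.)

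The paper avoids this with an inductive dimension-reduction rather than a one-shot argument. One works on a positive-measure set $I\subset W^c(p)$ on which a fixed Oseledets gap gives a continuous splitting $V^u_t\oplus V^s_t$, takes the least $j$ with $\mu^c_p(h^j(I)\cap I)>0$, and perturbs at a single point of the orbit of $z_1$ so that, for \emph{any} $su$-invariant $m$, the support of $m_{h^j(t)}$ is forced into a subspace $V'$ with $\dim V'<\dim V^s$ on a positive-measure set of $t$. One then repeats the whole argument with $V'$ in place of $V^s$, using the next homoclinic point $z_2$ so that the new perturbation leaves the orbit of $z_1$ untouched; after fewer than $d$ steps the support is empty. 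Thus the disjoint orbits $z_1,\dots,z_d$ are consumed \emph{sequentially} across the induction, not used to decouple $d$ holonomies simultaneously as you propose. Your sketch also omits the return-time and density-point argument needed to pass from the almost-everywhere $su/c$-invariance to a specific point $t'$ at which Lemma~\ref{l.transv_finite} is applied.
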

\begin{proof}
 Given a neighbourhood of $A$ by Lemma~\ref{l.posrest} we can find $A'$ in this neighbourhood such that there exists some compact center leave $W^c(p)$ with $L(A'_p,f_p,\mu^c_p)>0$. 
 Take $z$ in $W^u(p)\cap W^{cs}(p)$ and $h:W^c(p)\to W^c(p)$ defined as before, observe that by the absolute continuity of the holonomies this map is absolutely continuous.
 
 Lets call the Lyapunov exponents of $(f_p,A'_p)$ by $\lambda^1_c,\dots,\lambda^k_c$. By the positivity of the integrated Lyapunov exponents there 
 exists $i$ such that $\lambda^i_c(t)>\lambda^{i+1}_c(t)$ for some 
 $1\leq i\leq 2d$, as we have finite possibilities of $i$ we can take some 
 positive measure set $I\subset W^c(p)$ such that, for every $t\in I$, there exists an invariant decomposition $\real^{2d}=V^u_t+V^s_t$ 
 with constant dimensions such that the smaller Lyapunov exponent in $V^u_t$ is larger than the largest Lyapunov exponent in $V^s_t$.
 
  We can also assume, reducing $I$ if necessary, that this decomposition varies continuously in $I$. As $h$ preserves the Lebesgue measure $\mu^c_p$ ($h$ is a rotation in the circle coordinates) we can find an iterate $j$ such that $h^j(I)\cap I$ has positive measure, take $j$ to be the smaller integer with this property.
  
Suppose that $A'$ admits some $su$-invariant measure, by Theorem~\ref{t.sucinv} we can take the disintegration to be $su/c$-invariant.
By \cite[Proposition~3.1]{BP15} we have that, for every $t\in I$, $m_t=m^u_t+m^s_t$ where $\supp(m^u_t)\subset V^u_t$ and $\supp(m^s_t)\subset V^s_t$. 
Taking $t'\in I$ density point of $h^j(I)\cap I$ we can find a neighbourhood $N\subset W^c(p)$ of $t'$ such that the sets $h^{u}_{p,z}(h^i(N))\subset W^c(z),i=0,\dots,j-1$ are disjoint. 

Let $H^{A'}_{t}$ be the composition of the stable linear holonomy from $t$ to the point $h^u(t)$ and the unstable linear 
holonomy from $h^u(t)$ to $h(t)$.

The $su/c$-invariance implies that $m_{h^j}(t)=(H^j_{t})_* m_t$ for $\mu^c_p$-almost every point. This implies that
$$
\supp(m_{h^j(t)})\subset \lbrace V^u_{h^j(t)},V^s_{h^j(t)}\rbrace\cap \lbrace H^j_{t}V^u_t,H^j_{t}V^s_t\rbrace.
$$

Fix a neighbourhood $\cN$ of $h^u(t')$ that does not contain $f^n(h^u(t'))$ for $n\neq 0$, making a small perturbation we find $\hA:M\to\Sp$ such that $\hA(h^u(t'))=\sigma A'(h^u(t'))$ and $\hA(h^u(r))=A'(h^u(r))$
for every $r\in W^c(z)-N$ and also $\hA(x)=A'(x)$ outside $\cN$. We have that 
$${H^{\hA}_{t'}}^j=H^{j-1,A'}_{h({t'})}\circ H^{s,A'}_{h^u({t'}),h({t'})}\circ \sigma \circ H^{u,A'}_{{t'},h^u({t'})}.
$$
Then using \ref{l.transv_finite} we can find $\sigma$ arbitrarily close to the identity such that 
$V^s_{h^j(t')}\cap H^j_{t'}V^u_{t'}=\emptyset$ and $V^u_{h^j(t')}\cap H^j_{t'}V^s_{t'}=\emptyset$. So

$$
\supp(m_{h^j(t')})\subset V'_{h^j(t')}=V^s_{h^j(t)}\cap H^j_{t'}V^s_{t'}
$$
using \ref{l.transv_finite} again we can make $\dim V'<\dim V^s$. As $t'$ and $h^j(t')$ are density points of a set where the $V^u$ and $V^s$ varies continuously then there exists a positive measure subset of $t\in I$ such that $$
\supp(m_{h^j(t)})\subset V'_{h^j(t)}\quand \dim V'<\dim V^s.
$$ 

\begin{figure}[ht]
    \centering
    \includegraphics[scale=0.6]{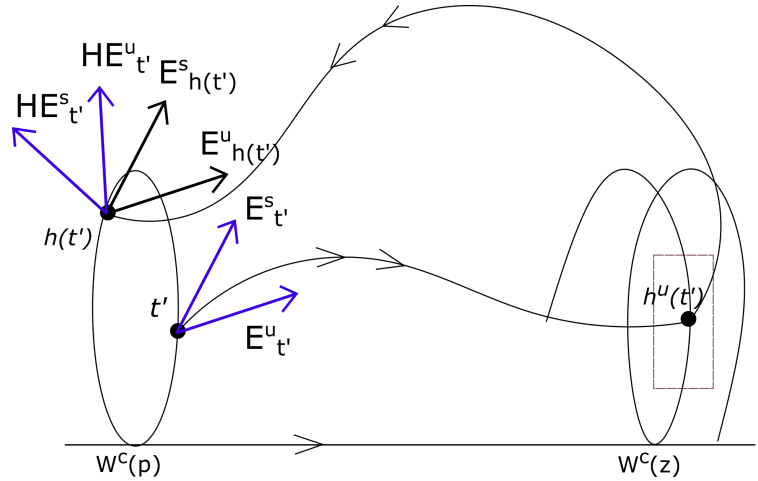}
    \caption{perturbation of $H$}\label{fig.perturb}
    \end{figure}

If $\hA$ still admits some $m$ $su$-invariant, we can repeat the argument taking a different homoclinic point $z_1,\dots,z_d$ (i.e the orbit of $z_i$ is not in the orbit of $z_j$ for $i\neq j$) with $V'$ instead of $V^s$ such that the new perturbation does not affect the cocycle in the orbit of $z$. Inductively, if the perturbed cocycles always admit some $su$-invariant measure, we can do this with
less than $d$-point $z_i$ to get that $\supp(m_t)=\emptyset$, for $t$ in a positive measure set, a contradiction. 

Then we can find $A''$ arbitrarily close to $A$ does not admit any $su$-invariant measure.
 \end{proof}
\begin{proof}[proof of Theorem~\ref{teo1}]
 By Lemma~\ref{l.nonsu} the set of cocycles that do not admit any $su$-invariant measure is dense, by the invariance principle~\cite{Extremal} and by Lemma~\ref{l.open} 
 every cocycle in this set has a neighbourhood of cocycles with positive Lyapunov exponents. 
\end{proof}

\bibliography{bib}
\bibliographystyle{plain}

\end{document}